\documentclass[11pt]{article}
\usepackage{graphicx}
\usepackage[utf8x]{inputenc}
\usepackage{amsmath,sectsty,amsthm,amssymb,fontenc,color,tensor,textcomp,amsfonts,relsize,graphicx,graphics,enumerate}
\usepackage{fancyhdr}
\usepackage[colorlinks=true,
            linkcolor=red,
            urlcolor=blue,
            citecolor=green]{hyperref}
\usepackage{mathtools}
\usepackage{amsmath,tikz}
\usepackage{tensor}
\usepackage[rmargin=1.5cm, lmargin=1.5cm]{geometry}
\newtheorem{theo}{Theorem}[section]
\newtheorem{lem}[theo]{Lemma}

\newtheorem{cor}[theo]{Corollary}
\newtheorem{df}[theo]{Definition}

\newtheoremstyle{rmk}% name
  {\topsep}% space above
  {\topsep}% space below
  {}% body font
  {}% indent amount
  {\itshape}% theorem head font
  {}% punctuation after theorem head
  {.5em}% space after theorem head
  {\thmname{#1}\thmnumber{ #2.}\thmnote{ (#3)}}% theorem head spec
\theoremstyle{rmk}
\newtheorem{rmk}[theo]{Remark}

\makeatletter
\newcommand{\address}[1]{\gdef\@address{#1}}
\newcommand{\email}[1]{\gdef\@email{\url{#1}}}
\newcommand{\@endstuff}{\par\vspace{\baselineskip}\noindent\small
\begin{tabular}{@{}l}\scshape\@address\\{E-mail address:} \@email \end{tabular}}
\AtEndDocument{\@endstuff}
\makeatother

\title{ %Local Estimates of Hermite-Hadamard type in global nonpositive curvature spaces OR 
Fractional Integral Estimates of Hermite-Hadamard type in  Global Nonpositive Curvature Spaces
%/ Hadamard spaces as per \cite{conde1} OR Fractional H-H inequalities on Hadamard spaces OR On $h$-geodesically convex fomulation of Hermite-Hadamard inequalities in global nonpositive curvature spaces
}
\thispagestyle{empty}

\date{}
\author{
Peter Olamide Olanipekun }
\address{Department of Mathematics, The University of Auckland, Auckland 1010, New Zealand.\\ E-mail address: \color{blue} olanipekunp@gmail.com}

\email{peter.olanipekun@auckland.ac.nz}
\date{}
\begin{document}\sloppy
\maketitle

\abstract{We extend the notion of convexity of functions defined on global nonpositive curvature spaces by introducing (geodesically) $h$-convex functions. We prove estimates of Hermite-Hadamard type via Katugampola's fractional integrals. We obtain an important  corollary which gives an essentially sharp estimate involving squared distance mappings between points in a global NPC space. This is a contribution to analysis on spaces with curved geometry.}

\section{Introduction}

\iffalse

IMRN, Bulletin Belgian Math Soc., Nagoya Math Soc., Applied Maths Letters, Proc. Amer. Soc., Nonlinear Analysis Theory and Applications SCience direct
Analysis and Geometry in metric space
Israel OR Ukranian Journal of mathematics
Differential Geometry - Dynamical Systems (journal :))

Intro with prelim towards the end like in fractal paper. State results and corallaries

Proof of theorems 

do Corollary 3.5 in Conde version of HH ineq.

see  https://journals.ontu.edu.ua/index.php/geometry/article/view/1605/1811

%https://en.wikipedia.org/wiki/Geodesic_convexity#:~:text=In%20mathematics%20%E2%80%94%20specifically%2C%20in%20Riemannian,of%20a%20set%20or%20function.
Re write your definiton of geodesic convexity using the link above and other materials you might find, to include not just zero and 1 but generally intervals a and b.

https://link.springer.com/article/10.1007/BF00940467

Use https://arxiv.org/pdf/1806.06373.pdf PAGE 25 for the definition of totally convex set.

Check that you are not just repeating the phrase "Global nonpositive curvature spaces", mention a few times and start using the abbreviuation "global NPC spaces". That should work well. :)

This area is convex analysis.
\fi

A fundamental inequality satisfied by convex function is the Hermite-Hadamard inequality: let $f: [a,b]\subset\mathbb{R} \to \mathbb{R}$, then the following holds
\begin{align}
 f\left( \frac{a+b}{2}  \right)   \leq \frac{1}{b-a}\int_a^b f(x) dx \leq \frac{f(a)+ f(b)}{2}.     \label{hadaml}    
\end{align}

Note that the  inequalities above are reversed if $f$ is concave. 
The Hermite-Hadamard inequality also characterises convex functions defined on an interval of $\mathbb{R}$ \cite{bes}. Inequality \eqref{hadaml} was first proved in the article \cite{herm} by Hermite, and since then it has garnered a lot of attention in the literature, with notable improvements, extensions, generalisations, and refinements; see, for instance, the monographs \cite{dragomir, niculescu} and the references therein. There are questions on how to extend the Hermite-Hadamard inequality to metric spaces or Riemannian manifolds. Some authors have extended \eqref{hadaml} to functions of several variables. For instance, in \cite{drag1}, Dragomir established inequalities involving triple integrals for convex functions defined on a ball and established interesting properties for a certain convex mapping. He also established inequalities on a disk in $\mathbb{R}^2$ and derived results for mappings naturally connected to the inequalities he had established \cite{drag2}. By defining what it means for a function to be convex on the coordinates, the author also proved sharp inequalities for functions defined on rectangles with some interesting applications \cite{drag3}. A converse of \eqref{hadaml} for functions defined on simplices was proved in \cite{flavia}; the authors established that the Hermite-Hadamard inequality on simplices characterises convex functions under some conditions on the measure. In \cite{mih}, a generalisation of \eqref{hadaml} for convex functions defined on simplices is proven by using a volume formula and its higher-dimensional generalisation; this approach completely evades well-known tools from Choquet's theory (see \cite{phel}, \cite{nicu1} for more details). In \cite{nicu}, Niculescu extended Choquet's theorem to compact metric spaces with a global nonpositive curvature, and by using a Jensen-type inequality, he obtained a generalisation of \eqref{hadaml} to spaces with curved geometry. In such spaces, geodesics play the role of segments. By establishing a lemma which gives a unique minimal geodesic from a point to another on hemispheres, the author in \cite{barani1} proved an Hermite-Hadamard type inequality for integrable convex functions defined on hemispheres. There are a few studies of the Hermite-Hadamard inequality on nonpositively curved (NPC) spaces \cite{nicu, conde, conde1}. NPC spaces are fundamental in several areas of mathematics, especially geometry and topology.

Hadamard \cite{hadamard1} pioneered the study of what is now known as NPC spaces, and Cartan investigated generalisations of such spaces in higher dimensions. Later, the works of Alexandrov and Busemann now form much of the foundational works on the theory of metric spaces with upper curvature bounds \cite{alex1, alex2, bus1, bus2}.

A metric space $(M,d)$ has {\it nonpositive Alexandrov curvature} if for any $p\in M$ and any geodesic segment $\gamma_{[x,y]} \in M$ between points $x$ and $y$, the following inequality holds:
$$d^2(p, \gamma_{[x,y]}(1/2))   \leq \frac{1}{2} (d^2(p, x)   + d^2(p, y)) -\frac{1}{4} d^2(x,  y)$$
provided the points $x$ and $y$ are sufficiently close to $p$, and $\gamma_{[x,y]}(1/2)$ is the middle between $x$ and $y$, that is, $d(x, \gamma_{[x,y]}(1/2))= d(y,\gamma_{[x,y]}(1/2)) = \frac{1}{2} d(x,y)$. The inequality above is well known as the CN inequality of Bruhat and Tits \cite{bru}. Additionally, $(M,d)$ is called a {\it Hadamard space} if it is complete. Examples include simply connected complete Riemannian manifolds with negative constant curvature, Bruhat-Tits buildings, Hilbert spaces, the upper half-plane endowed with the Poincaré metric, see Section 1.6 in \cite{kiy} and the book \cite{brid} for more examples.

In this paper, we will extend the notion of convex functions defined on metric spaces with global nonpositive curvature. Thereafter, we will establish Hermite-Hadamard type inequalities for a general class of convex functions via Katugampola's fractional integrals (which we shall define in due course).

\section{Global NPC Spaces and  Notions of Convexity}
In this section, we recall some facts about global NPC spaces and convex functions on such spaces, details can be found in \cite{bus1, jost, conde, conde1, nicu}. We also introduce the concept of $h$-convexity on global NPC spaces.

\begin{df} \normalfont
Let $t_1, t_2 \in [0,1]$, a curve $\gamma$ is called {\it geodesic} if there exists $\varepsilon >0$ such that the length of $\gamma$, when restricted to $[t_1, t_2]$, is the metric distance between $\gamma(t_1)$ and $\gamma(t_2)$ provided that $|t_1-t_2| <\varepsilon$.

A metric space $(M, d)$ is called a  {\it geodesic space}, if for any two points $x,y \in M$, there exists a shortest geodesic arc joining them. In other words,  there is a continuous curve $\gamma:[0,1] \to M$ with endpoints $x=\gamma(0)$ and $y=\gamma(1)$ and the length of $\gamma$ is precisely the distance between the points $x$ and $y$.

 A geodesic space has a {\it global nonpositive curvature  in the sense of Busemann} if for any two shortest geodesics $\gamma, \tilde \gamma :[0,1] \to M$ with $\gamma(0)=x=\tilde\gamma(0)$, the distance map $t\mapsto d(\gamma(t), \tilde\gamma(t))$ is convex.  In other words, for every point $x,y,z\in M$, we have the inequality
\begin{align}
2d(\gamma_{[x,y]}(1/2) , \gamma_{[x,z]}(1/2)  ) \leq d(y,z)  \label{fgt1}
\end{align}
provided that $x, y$ are sufficiently close to $z$. If equality holds in \eqref{fgt1}, then we say that $(M,d)$ is flat. The space $(M,d)$ has {\it negative curvature in the sense of Busemann} if the inequality in \eqref{fgt1} is strict (this happens when the endpoint of neither geodesic is contained in the other one).

The space $(M,d)$  is called a {\it global NPC space} if the following conditions are satisfied
\begin{enumerate}
\item each pair of points can be connected by a  geodesic
\item for  $x_0, x_1\in M$ there exists a point $y\in M$ such that    for all $p\in M$
\begin{align}
d^2(p,y) \leq \frac{1}{2}( d^2(p,x_0) +d^2(p,x_1) ) -\frac{1}{4} d^2(x_0,x_1).
\end{align}

\end{enumerate}
\end{df}
Generally, the following comparison principle holds: let $\gamma_{[p, x_0]}$, $\gamma_{[x_0, x_1]}$ and $\gamma_{[p, x_1]}$ be three geodesic segments connecting the points $p, x_0, x_1 \in M$, and let $x_t$ be an arbitrary point on $\gamma_{[x_0, x_1]}$ which is a fraction of $d(x_0,x_1)$, then 
$$d(x_0, x_t)= td(x_0, x_1)\quad\quad\textnormal{and}\quad \quad d(x_t, x_1)= (1-t)d(x_0, x_1)$$
with the following inequality
$$d^2(p,x_t)\leq (1-t) d^2(p, x_0)  + t d^2(p, x_1)-t(1-t) d^2(x_0, x_1) \,, \quad\quad t\in[0,1].$$

\noindent
Denote $x_t:= (1-t) x_0 + tx_1$. Clearly, $x_{1/2}:= \gamma_{[x_0, x_1]}(1/2)$ is the midpoint of the  segment that connects $x_0$ and $x_1$, and the mean value of a function on $[0,1]$ exists, thus we can introduce the notion of convexity on global NPC spaces. We see at once that a function $f: K\subseteq M\to \mathbb{R}$ is convex if for all $t\in[0,1]$, we have $f(x_t)\leq (1-t)f(x_0) +tf(x_1)$.
Global NPC spaces have global nonpositive curvature in the sense of Busemann, they are also known as $\textnormal{CAT}(0)$ spaces.

\begin{theo}\cite{kor, res, uwe}
Let $(M,d)$ be a global NPC space. Let $x_0, x_1, y_0$ and $y_1$ be four points in $M$. Let $x_t$ be the point which is a fraction of $d(x_0, x_1)$. For any $t\in[0,1]$, the following holds
\begin{align}
d^2(x_t, y_0) + d^2(x_{1-t}, y_1) &\leq d^2(x_0,y_0) + d^2(x_1,y_1) +2t^2d^2(x_0, x_1)  \nonumber
\\& \quad\quad +t(d^2(y_0,y_1) - d^2(x_0,x_1)) -t(d(y_0, y_1) -d(x_0, x_1))^2.
\end{align}
\end{theo}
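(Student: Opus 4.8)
The plan is to reduce the stated four-point inequality to two applications of the quadratic comparison principle recorded just above the theorem, combined with one genuinely four-point ingredient, namely Reshetnyak's quadrilateral (quasilinearisation) inequality. Throughout, fix $t\in[0,1]$ and work on the geodesic segment $\gamma_{[x_0,x_1]}$, whose division points $x_t$ and $x_{1-t}$ are the well-defined points with $d(x_0,x_t)=t\,d(x_0,x_1)$ and $d(x_0,x_{1-t})=(1-t)\,d(x_0,x_1)$.

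First I would apply the comparison principle twice on this segment. With base point $p=y_0$ and interior point $x_t$ it yields
\[
d^2(x_t,y_0)\le (1-t)\,d^2(x_0,y_0)+t\,d^2(x_1,y_0)-t(1-t)\,d^2(x_0,x_1),
\]
and with base point $p=y_1$ and interior point $x_{1-t}$ (for which the relevant parameter in the comparison principle is $1-t$) it yields
\[
d^2(x_{1-t},y_1)\le t\,d^2(x_0,y_1)+(1-t)\,d^2(x_1,y_1)-t(1-t)\,d^2(x_0,x_1).
\]
Adding these, the only ``off-diagonal'' terms that survive are $t\,d^2(x_1,y_0)+t\,d^2(x_0,y_1)$. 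To dispose of them I would invoke the quasilinearisation inequality, valid in every global NPC space (a consequence of Reshetnyak's majorisation theorem, equivalently of the Berg--Nikolaev characterisation of $\mathrm{CAT}(0)$ spaces):
\[
d^2(x_1,y_0)+d^2(x_0,y_1)\le d^2(x_0,y_0)+d^2(x_1,y_1)+2\,d(x_0,x_1)\,d(y_0,y_1).
\]
Multiplying this by $t$, substituting into the sum of the two comparison estimates, and then using the elementary identity $d^2(x_0,x_1)+d^2(y_0,y_1)-\bigl(d(y_0,y_1)-d(x_0,x_1)\bigr)^2=2\,d(x_0,x_1)\,d(y_0,y_1)$, the contributions involving $d^2(x_0,x_1)$ and $d^2(y_0,y_1)$ collapse and what remains is precisely the right-hand side of the asserted estimate. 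One checks that this last recombination is in fact an identity, so the only slack introduced in the whole argument comes from the two comparison-principle estimates and from quasilinearisation; the cases $t\in\{0,1\}$ are degenerate and immediate.

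The main obstacle is the quasilinearisation inequality itself: it is a true four-point statement and cannot be obtained from the three-point (point-versus-segment) comparison principle alone. To establish it I would either quote Reshetnyak's majorisation theorem --- majorise the geodesic quadrilateral with vertices $x_0,x_1,y_1,y_0$ by a convex Euclidean quadrilateral $\bar x_0,\bar x_1,\bar y_1,\bar y_0$ having the same four side lengths and no shorter diagonals, and then apply the Euclidean identity $|\bar x_0-\bar y_1|^2+|\bar x_1-\bar y_0|^2=|\bar x_0-\bar y_0|^2+|\bar x_1-\bar y_1|^2+2\langle\bar x_1-\bar x_0,\bar y_1-\bar y_0\rangle$ followed by the Cauchy--Schwarz inequality --- or else invoke the Berg--Nikolaev inequality $\langle\overrightarrow{x_0x_1},\overrightarrow{y_0y_1}\rangle\le d(x_0,x_1)\,d(y_0,y_1)$ for the quasilinearisation form directly. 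Everything else in the proof is routine bookkeeping: confirming that the division points are well defined on the geodesic $\gamma_{[x_0,x_1]}$, handling the degenerate values of $t$, and collecting terms.
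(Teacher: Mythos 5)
Your proposal is correct, and the algebra checks out exactly: adding the two three-point comparison estimates gives $(1-t)\bigl[d^2(x_0,y_0)+d^2(x_1,y_1)\bigr]+t\bigl[d^2(x_1,y_0)+d^2(x_0,y_1)\bigr]-2t(1-t)d^2(x_0,x_1)$, quasilinearisation converts the bracketed cross term into $d^2(x_0,y_0)+d^2(x_1,y_1)+2d(x_0,x_1)d(y_0,y_1)$, and the resulting bound $d^2(x_0,y_0)+d^2(x_1,y_1)+2t\,d(x_0,x_1)d(y_0,y_1)-2t(1-t)d^2(x_0,x_1)$ coincides identically with the theorem's right-hand side after expanding $-t(d(y_0,y_1)-d(x_0,x_1))^2$. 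Note, however, that the paper itself offers no proof of this statement --- it is quoted from the references \cite{kor, res, uwe} --- so there is no in-text argument to compare against; the proofs in those sources (e.g.\ the quadrilateral comparison of Korevaar--Schoen and Mayer) proceed by subembedding the four points into a Euclidean configuration via Reshetnyak majorisation and reading the inequality off the comparison quadrilateral. Your route is a legitimate and arguably more transparent alternative: it isolates the single genuinely four-point ingredient (the Berg--Nikolaev/Reshetnyak quasilinearisation inequality $d^2(x_1,y_0)+d^2(x_0,y_1)\le d^2(x_0,y_0)+d^2(x_1,y_1)+2d(x_0,x_1)d(y_0,y_1)$, which does require Reshetnyak's theorem and cannot be deduced from the CN inequality alone, as you correctly flag) and reduces everything else to the three-point comparison principle already recorded in the paper. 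The trade-off is that the majorisation approach proves the quasilinearisation step and the final inequality in one stroke, whereas your decomposition makes visible exactly which term of the right-hand side each hypothesis contributes.
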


\begin{df}\normalfont
A subset $K\subseteq M$ is called convex if for each geodesic $\gamma:[0,1]\rightarrow M$ joining two arbitrary points in K, it holds that $\gamma([0,1])\subseteq K$.
\end{df}

\begin{df}\normalfont\label{srq}
A function $f:K\rightarrow \mathbb{R}$ is convex if the  function $f\circ \gamma:[0,1] \rightarrow \mathbb{R}$ is convex whenever $\gamma:[0,1]\rightarrow K$ is geodesic, that is, for all $t\in[0,1]$
\begin{align}
f(\gamma(t))\leq (1-t)f(x) +t f(y).   \label{jklq1}
\end{align}
\end{df}

Note that inequality \eqref{jklq1} follows from the convexity of $f\circ \gamma$, that is,

$$f(\gamma(t)) = f(\gamma[(1-t)\cdot 0 +t\cdot 1]) \leq (1-t) f\circ\gamma(0) +t f\circ \gamma(1) = (1-t)f(x) +t f(y).$$

In particular, the distance map $d:M \times M \to \mathbb{R}$ is convex. In other words, given any two geodesics $\gamma, \eta : [0,1] \to M$, we have the inequality
$$d( \gamma(t), \eta(t) )  \leq (1-t) d(\gamma(0),  \eta(0))  + td( \gamma(1), \eta(1) ). $$
This implies that every ball in a global NPC space is a convex set. Let $k>1$, for every $y\in M$,  the  map $G_y(x):= d^k(x,y)$   is strictly convex. That is, for every nonconstant geodesic $\gamma: [0,1] \to M$ and $t\in(0,1)$ we have the inequality
$G_y(\gamma(t)) < (1-t)G_y(\gamma(0)) + tG_y(\gamma(1))$.

If $-f$ is convex then $f$ is {\it concave}. The function $f$ is said to be {\it affine} if $f$ is both concave and convex.
There are several notions of convexity on metric spaces \cite{ohta, kell, shaikh}. 
Let $p>0$, a function is said to be $p$-convex if $f^p$ is convex. If  $f(\gamma(t)) \leq \textnormal{max}\{ f(\gamma(0)), f(\gamma(1)) \} $, then we say that $f$ is {\it quasi convex}. A function $f: K\to \mathbb{R}$ is {\it geodesically} $\varphi$-{\it convex} if there is a function $\varphi: \mathbb{R}\times \mathbb{R} \to \mathbb{R}$ such that $f(\gamma(t))  \leq f(x)  +t\varphi( f(y), f(x))$ for all $x,y \in K$ and $t\in[0,1]$. For example, let $M= \mathbb{R} \times \mathbb{S}^1$ and $\varphi(x,y):= x^3-y^3$, the function  $f:K \subset M\to \mathbb{R}$ defined by $f(s,c)= x^3$ is geodesically $\varphi$-convex but not convex. The notion of geodesically invex sets and geodesically pre-invex functions can be similarly defined on Riemannian manifolds (see \cite{bar}).

%There are articles on notions of convexities on metric spaces \cite{ohta} ("L-convexity which can be seen as a relaxed form of Busemann's nonpositive curvature assumption") and \cite{shaikh} . Use \quad
%\cite{https://sites.pitt.edu/~manfredi/papers/convexCofV.pdf}.   convex functions on Heisenberg groups
%other notions: https://arxiv.org/pdf/1601.03363.pdf
%\cite{kell}

The notion of $h$-convexity for functions defined on an interval of $\mathbb{R}$ was introduced by Varo\u sanec in \cite{varo}. It is known that the class of $h$-convex functions unifies existing classes of convex functions such as $s$-convex functions, Godunova-Levin functions, and $P$-functions. Motivated by the results in \cite{varo, conde1}, we extend the notion of $h$-convex functions to global NPC spaces.

\begin{df}\normalfont  \label{h-conv}
Let $K\subseteq M$ be a convex subset of a global NPC space, and let $h: \mathbb{R} \to (0, \infty).$ A function $f:K\rightarrow \mathbb{R}$ is geodesically $h$-convex if the function $f\circ \gamma:[0,1] \rightarrow \mathbb{R}$ is $h$-convex whenever $\gamma:[0,1]\rightarrow K$ is geodesic, that is, for all $t\in[0,1]$ we have
$$f(\gamma(t)) \leq h(1-t)f(\gamma(0)) +h(t) f(\gamma(1)).$$
\end{df}

\begin{rmk}
Observe that if $h(t) \geq t$ for all $t\in[0,1]$, then non-negative (geodesically) convex functions are $h$-convex.
\end{rmk}

Let $h_k(x)= x^k$, $x>0$. It is known that the function $g(x)= x^r$ where $x>0$, is $h_k$-convex if $r\in (-\infty,0]\cup [1, \infty)$ and $k\leq 1$. Also, $g$ is $h_k$-convex if $r\in(0,1)$ and $k\leq r$. If we define $f\circ\gamma:= g$ and restrict the domain of $g$ to $[0,1]$, then $f$ is geodesically $h_k$-convex on $K\subseteq M$ if $g$ is $h_k$-convex on $[0,1]$.

\begin{theo}\cite{sak} \label{sak1}
Let $f:[a,b] \rightarrow\mathbb{R}$ be  an $h$-convex function, then
\begin{align}
\frac{1}{2h\left(\frac{1}{2}  \right)}  f\left(\frac{a+b}{2}  \right) \leq \frac{1}{b-a} \int_a^b f(x) dx \leq [f(a) +f(b)]\int_0^1 h(t) dt.  \label{hfg7}
\end{align}
\end{theo}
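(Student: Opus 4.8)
The plan is to establish the two inequalities in \eqref{hfg7} independently, each time applying Definition~\ref{h-conv} (for $M=\mathbb{R}$, where a geodesic joining $a$ and $b$ is $\gamma(t)=(1-t)a+tb$) at a suitable value of the parameter and then integrating in $t$ over $[0,1]$. The workhorse identity is the affine change of variables $x=(1-t)a+tb$, which gives $\frac{1}{b-a}\int_a^b f(x)\,dx=\int_0^1 f\big((1-t)a+tb\big)\,dt$, together with its companion $\int_0^1 g(1-t)\,dt=\int_0^1 g(t)\,dt$ obtained from the substitution $t\mapsto 1-t$.

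For the upper estimate I would use $h$-convexity directly along the segment: for every $t\in[0,1]$, $f\big((1-t)a+tb\big)\le h(1-t)f(a)+h(t)f(b)$. Integrating both sides over $t\in[0,1]$, rewriting the left-hand side via the change of variables above, and using $\int_0^1 h(1-t)\,dt=\int_0^1 h(t)\,dt$, one obtains $\frac{1}{b-a}\int_a^b f(x)\,dx\le \big(f(a)+f(b)\big)\int_0^1 h(t)\,dt$. For the lower estimate I would exploit the midpoint: for each $t\in[0,1]$ the two points $u=(1-t)a+tb$ and $v=ta+(1-t)b$ satisfy $\frac{u+v}{2}=\frac{a+b}{2}$, so applying $h$-convexity at parameter $\tfrac12$ on the geodesic from $u$ to $v$ gives $f\big(\tfrac{a+b}{2}\big)\le h(\tfrac12)\big(f(u)+f(v)\big)$. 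Integrating in $t$ over $[0,1]$ and noting that $\int_0^1 f(u)\,dt=\int_0^1 f(v)\,dt=\frac{1}{b-a}\int_a^b f(x)\,dx$ (the second again by $t\mapsto 1-t$) yields $f\big(\tfrac{a+b}{2}\big)\le \frac{2h(1/2)}{b-a}\int_a^b f(x)\,dx$, and dividing by $2h(1/2)>0$ gives the claimed bound.

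The computations are routine and I expect no genuine obstacle; the only points deserving a word are the measurability and integrability of $f$ on $[a,b]$ (so the integrals are well defined — and if $\int_0^1 h$ diverges the upper bound is trivially $+\infty$), and the strict positivity of $h$, in particular $h(\tfrac12)>0$, which is guaranteed by $h:\mathbb{R}\to(0,\infty)$ and is exactly what licenses the final division. Structurally this is the classical Hermite--Hadamard argument behind \eqref{hadaml}, with the constant $2$ replaced by $1/h(\tfrac12)$ and the constant $\tfrac12$ replaced by $\int_0^1 h(t)\,dt$.
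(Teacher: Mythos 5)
Your proof is correct and is exactly the standard argument for this result: the paper itself states the theorem without proof (citing Sarikaya--Saglam--Yildirim), and your two steps --- integrating $f((1-t)a+tb)\le h(1-t)f(a)+h(t)f(b)$ for the upper bound, and applying $h$-convexity at the midpoint of the segment joining $(1-t)a+tb$ and $ta+(1-t)b$ for the lower bound --- are precisely how the cited source proves it. Your remarks on integrability of $f$ and on $h(\tfrac12)>0$ are the right hypotheses to flag.
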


Theorem \eqref{sak1}  is known as the Hermite-Hadamard inequality for $h$-convex functions. With $h(x)=x$, the inequality \eqref{hfg7} reduces to \eqref{hadaml}.

The following lemma shows some properties of geodesics and convex functions in a global NPC space.
\begin{lem}\cite{conde1}
Let $(M,d)$ be a global NPC space, $K\subseteq M$ a convex set and $\gamma: [0,1] \to K$ a geodesic connecting $\gamma(0)$ and $\gamma(1)$. Then
\begin{enumerate}
\item For $t_1, t_2\in[0,1]$ the curve $\gamma\big|_{[t_1, t_2]} (\lambda) = \gamma((1-\lambda)t_1 +\lambda t_2)$ is the unique geodesic connecting $\gamma(t_1)$ with $\gamma(t_2)$.

\item For any $t_0 \in [0,1]$ the midpoint between $\gamma(t_0)$ and $\gamma(1-t_0)$ is given by $\gamma(1/2)$.

\item  If $f:K \to\mathbb{R}$ is convex, then $\int_0^1 f(\gamma(u)) du =\int_0^1 f(\gamma(1-t)) dt$.
\end{enumerate}

\end{lem}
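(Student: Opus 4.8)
The plan is to treat the three items in order, the first being the substantive one and the remaining two quick consequences of it.

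\emph{Item (1).} Recall from the comparison principle recorded just after the definition of a global NPC space that a geodesic $\gamma\colon[0,1]\to M$ is affinely parametrised, so that $d(\gamma(s),\gamma(r))=|s-r|\,d(\gamma(0),\gamma(1))$ for all $s,r\in[0,1]$. Writing $\sigma(\lambda)=(1-\lambda)t_1+\lambda t_2$ and $\eta=\gamma\circ\sigma$, I would compute directly
\[
 d(\eta(\lambda),\eta(\mu)) = |\sigma(\lambda)-\sigma(\mu)|\,d(\gamma(0),\gamma(1)) = |\lambda-\mu|\,|t_1-t_2|\,d(\gamma(0),\gamma(1)) = |\lambda-\mu|\,d(\eta(0),\eta(1)),
\]
which shows that $d$ is additive along $\eta$ and that the length of $\eta$ over any subinterval equals the distance between the corresponding endpoints; hence $\eta$ is a shortest geodesic arc joining $\gamma(t_1)$ and $\gamma(t_2)$. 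For uniqueness, suppose $\zeta\colon[0,1]\to M$ is another geodesic joining $x:=\gamma(t_1)$ to $y:=\gamma(t_2)$, and set $L=d(x,y)$. Then $\eta(1/2)$ and $\zeta(1/2)$ are both midpoints of $x$ and $y$. Applying the comparison inequality $d^2(p,x_{1/2})\le\tfrac12 d^2(p,x)+\tfrac12 d^2(p,y)-\tfrac14 d^2(x,y)$ with the midpoint $x_{1/2}=\eta(1/2)$ taken along the geodesic $\eta$ and $p=\zeta(1/2)$ yields $d^2(\zeta(1/2),\eta(1/2))\le\tfrac12\cdot\tfrac{L^2}{4}+\tfrac12\cdot\tfrac{L^2}{4}-\tfrac14 L^2=0$, so $\eta(1/2)=\zeta(1/2)$. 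Repeating the bisection on the halves $\eta|_{[0,1/2]},\zeta|_{[0,1/2]}$ and $\eta|_{[1/2,1]},\zeta|_{[1/2,1]}$ and iterating shows that $\eta$ and $\zeta$ agree on every dyadic rational in $[0,1]$, and continuity of both curves forces $\eta\equiv\zeta$.

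\emph{Item (2).} By Item (1) the unique geodesic joining $\gamma(t_0)$ and $\gamma(1-t_0)$ is $\lambda\mapsto\gamma\big((1-\lambda)t_0+\lambda(1-t_0)\big)$; evaluating at $\lambda=1/2$ gives $\gamma\big(\tfrac12 t_0+\tfrac12(1-t_0)\big)=\gamma(1/2)$, which is therefore the midpoint in question.

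\emph{Item (3).} Since $\gamma$ is continuous and $f$ is convex, $f\circ\gamma$ is a finite convex function on $[0,1]$, hence bounded and Lebesgue integrable there, so both sides are well defined; the identity then follows from the change of variables $u=1-t$, $du=-dt$, which merely interchanges the limits $0$ and $1$.

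The only genuine obstacle is the uniqueness assertion in Item (1); the rest is bookkeeping. The bisection argument is the standard route, and the one point that needs care is that the comparison inequality must be applied to the midpoint of an \emph{arbitrary} geodesic segment (here $\eta$, and then its halves), not merely to the distinguished point furnished by condition (2) in the definition — but this is precisely what the comparison principle stated after the definition provides, since there the left-hand side refers to an arbitrary point $x_t$ lying on the geodesic segment $\gamma_{[x_0,x_1]}$.
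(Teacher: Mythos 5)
Your proof is correct. Note, however, that the paper does not actually prove this lemma: it is quoted verbatim from Conde's article \cite{conde1} and used as a black box, so there is no in-paper argument to compare against. Your route is the standard one and matches what one finds in the NPC literature: reparametrisation plus the convention (built into the paper's comparison principle, where $d(x_0,x_t)=t\,d(x_0,x_1)$) that geodesics $[0,1]\to M$ are parametrised proportionally to arc length gives that $\gamma((1-\lambda)t_1+\lambda t_2)$ is a geodesic; uniqueness follows from the midpoint argument, since the NPC inequality applied with $p=\zeta(1/2)$ forces $d^2(\zeta(1/2),\eta(1/2))\le 0$, and bisection plus continuity finishes the job. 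You correctly flag the one delicate point, namely that the quadratic comparison inequality must be available along an arbitrary geodesic segment, which the comparison principle stated after the definition does provide; alternatively one can apply condition (2) of the definition of a global NPC space directly to the two candidate midpoints and the distinguished point $y$, which avoids even that issue. In item (2) you implicitly use that midpoints are unique, but this is exactly the $d^2\le 0$ computation you already carried out, so nothing is missing in substance; item (3) is, as you say, a change of variables valid for any integrable function, with integrability guaranteed because a finite convex function on $[0,1]$ is bounded and measurable.
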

With the help of the lemma above, Conde \cite{conde1} proved the following Hermite-Hadamard inequality for convex functions on global NPC space.

\begin{theo} \label{c09}
Let $(M,d)$ be a global NPC space, $K\subseteq M$ a convex subset and $f: K \to \mathbb{R}$  a convex function. Then 
$$f(\gamma(1/2)) \leq \int_0^1 f(\gamma(t)) dt \leq \frac{f(\gamma(0))  + f(\gamma(1))}{2}$$
 for all geodesic $\gamma:[0,1] \to K$.
\end{theo}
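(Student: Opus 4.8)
The plan is to prove the two inequalities separately, each one reducing to the one-dimensional convexity of $f\circ\gamma$ combined with the three parts of the preceding Lemma.

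For the right-hand inequality I would simply integrate the defining convexity inequality. Since $\gamma$ restricted to $[0,1]$ is a geodesic in $K$, Definition \ref{srq} gives $f(\gamma(t))\le (1-t)f(\gamma(0))+t\,f(\gamma(1))$ for every $t\in[0,1]$; integrating in $t$ over $[0,1]$ and using $\int_0^1(1-t)\,dt=\int_0^1 t\,dt=\tfrac12$ yields $\int_0^1 f(\gamma(t))\,dt\le \tfrac12\bigl(f(\gamma(0))+f(\gamma(1))\bigr)$.

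For the left-hand inequality I would exploit the fact that, by part (1) of the Lemma, for each $t\in[0,1]$ the restriction $\gamma\big|_{[t,1-t]}$ is the unique geodesic joining $\gamma(t)$ and $\gamma(1-t)$, and by part (2) its midpoint is $\gamma(1/2)$. Applying the convexity of $f$ along this geodesic at parameter $1/2$ gives $f(\gamma(1/2))\le \tfrac12 f(\gamma(t))+\tfrac12 f(\gamma(1-t))$ for all $t\in[0,1]$. Integrating this in $t$ over $[0,1]$ produces $f(\gamma(1/2))\le \tfrac12\int_0^1 f(\gamma(t))\,dt+\tfrac12\int_0^1 f(\gamma(1-t))\,dt$, and part (3) of the Lemma identifies the second integral with $\int_0^1 f(\gamma(u))\,du$, so that $f(\gamma(1/2))\le \int_0^1 f(\gamma(t))\,dt$. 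Combining the two estimates gives the claim.

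The only point needing a little care—rather than a genuine obstacle—is integrability: one must observe that $f\circ\gamma$, being convex on the compact interval $[0,1]$, is bounded and continuous on $(0,1)$, hence Riemann (and Lebesgue) integrable, so all integrals above are well defined and the substitution $t\mapsto 1-t$ used in part (3) of the Lemma is legitimate. Everything else is a direct transcription of the Euclidean Hermite-Hadamard argument, with geodesics playing the role of segments and the Lemma supplying the two structural facts—uniqueness of the connecting geodesic and the location of the midpoint—that make the symmetry $t\mapsto 1-t$ compatible with the geometry of the global NPC space.
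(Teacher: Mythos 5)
Your proof is correct and follows exactly the route the paper intends: the theorem is quoted from Conde's work, and the preceding Lemma (uniqueness of the restricted geodesic, the midpoint identity for $\gamma(t_0)$ and $\gamma(1-t_0)$, and the symmetry of the integral under $t\mapsto 1-t$) is stated precisely so that the left-hand inequality follows by midpoint convexity plus symmetrization and the right-hand one by integrating the defining convexity inequality, which is what you do. Your added remark on integrability of $f\circ\gamma$ is a harmless and reasonable precaution.
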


%\section{Main Results}

%\section{Properties of $h$-Geodesically Convex Functions}

%file:///C:/Users/pola722/Downloads/BF01837981.pdf

\section{Fractional Integral Inequalities for $h$-Geodesically Convex Functions}
The results contained in this section are extensions and generalisations of the Hermite-Hadamard inequality, and related results. Indeed, we obtain Hermite-Hadamard type inequalities for the class of $h$-geodesically convex functions, which naturally generalises the class of convex functions. Also, we employ Katugampola's fractional integral operators which are generalisations of the well known Riemann-Liouville integral operators and the Hadamard integral operators.

\begin{df}\normalfont \cite{pod}
Let $\alpha >0$ be such that $n-1<\alpha < n$, $n\in\mathbb{N}$. The  left and right sided Riemann-Liouvile fractional integrals of order $\alpha$ are given by 
$$J^\alpha_{a+} f(x):=   \frac{1}{\Gamma(\alpha)} \int_a^x (x-t)^{\alpha -1} f(t)dt$$
and 
$$J^\alpha_{b-} f(x):=   \frac{1}{\Gamma(\alpha)} \int_x^b (t-x)^{\alpha -1} f(t)dt$$
respectively, where $a< x< b$ and $\Gamma$ is the well known Euler's gamma function defined by $\Gamma(x):= \int_0^\infty t^{x-1} e^{-t} dt$.
\end{df}

\begin{df}\normalfont\cite{samk}
The left and right sided Hadamard fractional integrals of order $\alpha >0$ are defined by
$$H^\alpha_{a+}f(x):=  \frac{1}{\Gamma(\alpha)} \int_a^x \left( \ln \frac{x}{t} \right)^{\alpha-1} \frac{f(t)}{t} dt$$
and
$$H_{b-}^\alpha f(x):=   \frac{1}{\Gamma(\alpha)} \int_x^b \left(\ln \frac{t}{x}   \right)^{\alpha-1} \frac{f(t)}{t}  dt$$
\end{df}

\begin{df}\normalfont\cite{kat2}
Let $c\in\mathbb{R}$ and $1\leq p\leq \infty$. The space $X_c^p(a,b)$ is the set of all complex-valued Lebesgue measurable functions $f$ equipped with norm

\[ \|f\|_{X_c^p}=\begin{cases} 
      \left(  \int_a^b \frac{|t^c f(t)|^p}{t} dt  \right)^{\frac{1}{p}} &,\quad 1\leq p<\infty \\
      \textnormal{esssup}_{a\leq t\leq b} |t^c f(t)| &,\quad p=\infty. 
   \end{cases}
\]
The space $X_c^p(a,b)$ is the classical $L^p(a,b)$ space when $c=\frac{1}{p}$. 
\end{df}

\begin{df}\normalfont\cite{kat, kat2}
Let $[a,b]\subset \mathbb{R}$ be a finite interval. The left and right side Katugampola fractional integrals of order $\alpha >0$ of $f\in X_c^p (a,b)$ are defined by
$${}^\rho I_{a+}^\alpha f(x) :=\frac{\rho^{1-\alpha}}{\Gamma(\alpha)} \int_a^x \frac{t^{\rho-1}}{(x^\rho -t^\rho)^{1-\alpha}}f(t) dt $$
and 
$${}^\rho I_{b-}^\alpha f(x) :=\frac{\rho^{1-\alpha}}{\Gamma(\alpha)} \int_x^b \frac{t^{\rho-1}}{(t^\rho -x^\rho)^{1-\alpha}}f(t) dt $$
with $a< x< b$ and $\rho>0$, provided the integrals exists.
\end{df}

The fractional integral operators  ${}^\rho I_{a+}^\alpha $  and ${}^\rho I_{b-}^\alpha $  are well defined on ${X_c^p}(a,b)$ for $\rho \geq c$, as shown in \cite{kat2}. There is a relationship among the integral operators defined above. Let $\alpha >0$ and $\rho >0$, then for $x> a$, it can be shown \cite{kat2} that 
$$\lim_{\rho\to 1} {}^\rho I_{a+}^\alpha f(x) =  J_{a+}^\alpha f(x) \quad\quad\mbox{and}\quad\quad  \lim_{\rho\to 0} {}^\rho I_{a+}^\alpha f(x) =  H_{a+}^\alpha f(x).$$
Similar identities hold for the right sided integrals.

%In what follows, we assume that $f\in X_c^p(K)$.
\begin{theo}
Let $\alpha>0$ and $\rho>0$. Let $(M,d)$ be a global NPC space, $K\subseteq M$ a convex set and $f:K \rightarrow [0, \infty)$ a geodesic $h$-convex function with $h\in L^q[0,1]$, $q>1$. Then the following inequalities hold
\begin{align}
f\left( \gamma\Big|_{[a^\rho , b^\rho] }\left( \frac{1}{2} \right) \right) &  \leq \frac{\rho^\alpha \Gamma(\alpha+1)}{(b^\rho -a^\rho)^{\alpha}} h\left( \frac{1}{2} \right) \Big(    {}^\rho I^\alpha _{a+} f\left( \gamma\left( b^\rho \right)\right) +     {}^\rho I^\alpha _{b-} f\left( \gamma\left( a^\rho \right)\right)           \Big)    \nonumber
\\&\leq    h\left( \frac{1}{2} \right) [f(\gamma(a^\rho)) +f(\gamma(b^\rho))]   \left[ \alpha \left(\frac{q-1}{\alpha q-1} \right)^{\frac{q-1}{q}}  \|h\|_{L^q[0,1]} + \rho^\alpha\Gamma(\alpha+1)  \,\,{}^\rho I_{0+}^\alpha h(1)   \right]                                   \label{cb1}
\end{align}
where $0\leq a, b\leq 1$.
\end{theo}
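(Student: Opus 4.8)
The plan is to push everything down to the unit interval and then run the classical $h$-convex Hermite--Hadamard argument, the only new ingredient being that the role of the midpoint/uniform average is played by the probability measure $\alpha u^{\alpha-1}\,du$ which is concealed inside the Katugampola kernel.

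First I would introduce the geodesic $\eta\colon[0,1]\to K$, $\eta(s)=\gamma\big((1-s)a^\rho+sb^\rho\big)$. By part (1) of the Lemma this is the unique geodesic joining $\gamma(a^\rho)$ and $\gamma(b^\rho)$, by part (2) its midpoint is $\eta(1/2)=\gamma\big|_{[a^\rho,b^\rho]}(1/2)$, and $f\circ\eta$ is $h$-convex on $[0,1]$, as is $f$ restricted to every sub-geodesic $\eta\big|_{[u,1-u]}$. The computational heart of the proof is a change of variables: substituting $u=(b^\rho-t^\rho)/(b^\rho-a^\rho)$ in the left Katugampola integral and $u=(t^\rho-a^\rho)/(b^\rho-a^\rho)$ in the right one, one gets
$$\frac{\rho^\alpha\Gamma(\alpha+1)}{(b^\rho-a^\rho)^\alpha}\Big({}^\rho I^\alpha_{a+}f(\gamma(b^\rho))+{}^\rho I^\alpha_{b-}f(\gamma(a^\rho))\Big)=\alpha\int_0^1 u^{\alpha-1}\big[(f\circ\eta)(u)+(f\circ\eta)(1-u)\big]\,du,$$
and one records $\alpha\int_0^1 u^{\alpha-1}\,du=1$, so that $\alpha u^{\alpha-1}\,du$ is a probability measure on $[0,1]$.

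For the left-hand inequality, for each fixed $u$ I would apply $h$-convexity of $f$ along the sub-geodesic from $\eta(u)$ to $\eta(1-u)$ at parameter $1/2$; since that midpoint is $\eta(1/2)$, this gives $(f\circ\eta)(1/2)\le h(1/2)\big[(f\circ\eta)(u)+(f\circ\eta)(1-u)\big]$. Dividing by $h(1/2)>0$, integrating against $\alpha u^{\alpha-1}$ over $[0,1]$, and invoking the displayed identity yields the first inequality. For the right-hand inequality I would instead use $h$-convexity of $f\circ\eta$ at $u$ and at $1-u$ and add, obtaining $(f\circ\eta)(u)+(f\circ\eta)(1-u)\le\big[h(u)+h(1-u)\big]\big[f(\gamma(a^\rho))+f(\gamma(b^\rho))\big]$; integrating against $\alpha u^{\alpha-1}$ reduces the task to estimating $\alpha\int_0^1 u^{\alpha-1}h(u)\,du$ and $\alpha\int_0^1 u^{\alpha-1}h(1-u)\,du$. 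The first is bounded by Hölder's inequality with exponents $q$ and $q/(q-1)$, using $\int_0^1 u^{(\alpha-1)q/(q-1)}\,du=\tfrac{q-1}{\alpha q-1}$, which produces exactly $\alpha\big(\tfrac{q-1}{\alpha q-1}\big)^{(q-1)/q}\|h\|_{L^q[0,1]}$; the second is recognised, after undoing a Katugampola-type substitution ($u=1-t^\rho$), as $\rho^\alpha\Gamma(\alpha+1)\,{}^\rho I^\alpha_{0+}h(1)$. Multiplying through by the common factor $h(1/2)$ completes the chain.

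The main obstacle is the bookkeeping in the change-of-variables step: one must track the powers of $\rho$ and the $\Gamma$-factors carefully so that both Katugampola integrals collapse onto the clean weight $\alpha u^{\alpha-1}$ paired with $f\circ\eta$, and likewise so that $\alpha\int_0^1 u^{\alpha-1}h(1-u)\,du$ is correctly identified with a Katugampola integral of $h$ under the same convention. A secondary point requiring care is integrability: the Hölder estimate is finite precisely when $(\alpha-1)q/(q-1)>-1$, i.e. $\alpha q>1$, which is automatic for $\alpha\ge1$ but is a genuine constraint tying $q$ to $\alpha$ otherwise; this is where the hypothesis $h\in L^q[0,1]$ with $q>1$ is actually used.
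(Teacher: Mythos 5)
Your proposal is correct and follows essentially the same route as the paper: midpoint $h$-convexity together with the substitution $x^\rho = t^\rho a^\rho + (1-t^\rho)b^\rho$ (your normalised $u$) gives the first inequality, and endpoint $h$-convexity combined with H\"older and the identification of $\alpha\int_0^1 u^{\alpha-1}h(1-u)\,du$ with a Katugampola integral gives the second, so your probability weight $\alpha u^{\alpha-1}\,du$ on $[0,1]$ is only a cosmetic repackaging of the paper's weight $\alpha\rho\, t^{\alpha\rho-1}\,dt$. Your remark that the H\"older step needs $\alpha q>1$ identifies a tacit hypothesis that the paper uses but does not record.
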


\begin{proof}
First, note that since $f$ is geodesically $h$-convex, we have
$$f\left( \gamma\left( \frac{x^\rho + y^\rho }{2} \right) \right)   \leq h\left(  \frac{1}{2}\right) [f(\gamma(x^\rho)) +f(\gamma(y^\rho))].$$

\noindent
Using this fact, and   the change of variables $x^\rho=t^\rho a^\rho +(1-t^\rho)b^\rho$ and $y^\rho=t^\rho b^\rho +(1-t^\rho)a^\rho$, we find for all
$t\in[0,1]$ and $0\leq a\leq x,y\leq b \leq 1$ the estimate
\begin{align}
f\left( \gamma\Big|_{[a^\rho , b^\rho] }\left( \frac{1}{2} \right) \right)= f\left( \gamma\left( \frac{x^\rho + y^\rho }{2} \right) \right) \leq h\left(  \frac{1}{2}\right) [f(\gamma(t^\rho a^\rho +(1-t^\rho)b^\rho)) +f(\gamma(t^\rho b^\rho +(1-t^\rho)a^\rho))] .    \nonumber
\end{align}

Multiplying the latter by $t^{\alpha\rho-1}$ and integrating over $t\in[0,1]$ yields
\begin{align}
\frac{1}{\alpha\rho}f\left( \gamma\Big|_{[a^\rho , b^\rho] }\left( \frac{1}{2} \right) \right)    &\leq h\left( \frac{1}{2} \right) \int_a^b \left( \frac{b^\rho-x^\rho}{b^\rho-a^\rho}  \right)^{\alpha-1}\frac{x^{\rho-1}}{b^\rho-a^\rho}    f(\gamma(x^\rho))   dx   \nonumber
\\&\quad\quad  + h\left( \frac{1}{2} \right) \int_a^b \left( \frac{y^\rho-a^\rho}{b^\rho-a^\rho}  \right)^{\alpha-1}\frac{y^{\rho-1}}{b^\rho-a^\rho}    f(\gamma(y^\rho))   dy           \nonumber
\\&=   \frac{\rho^{\alpha-1} \Gamma(\alpha)}{(b^\rho -a^\rho)^{\alpha}} h\left( \frac{1}{2} \right) \Big(    {}^\rho I^\alpha _{a+} f\left( \gamma\left( b^\rho \right)\right) +     {}^\rho I^\alpha _{b-} f\left( \gamma\left( a^\rho \right)\right)           \Big)     \nonumber
\end{align}
which proves the first inequality in \eqref{cb1}.
To prove the second inequality, we use  the geodesic $h$-convexity of $f$ to obtain
\begin{align}
&f(\gamma(t^\rho a^\rho +(1-t^\rho)b^\rho)) +f(\gamma(t^\rho b^\rho +(1-t^\rho)a^\rho))                     \nonumber
\\ &\leq    h(t^\rho) f(\gamma(a^\rho))    +  h(1-t^\rho) f(\gamma(b^\rho))       +  h(t^\rho) f(\gamma(b^\rho))              + h(1-t^\rho) f(\gamma(a^\rho))  .   \nonumber
\end{align}
Multiplying both sides by $ h\left( \frac{1}{2} \right)\alpha \rho t^{\alpha \rho -1}$, and integrating over $[0,1]$ with respect to $t$ yields
\begin{align}
 \frac{\rho^{\alpha} \Gamma(\alpha+1)}{(b^\rho -a^\rho)^{\alpha}} h\left( \frac{1}{2} \right) &\Big(    {}^\rho I^\alpha _{a+} f\left( \gamma\left( b^\rho \right)\right) +     {}^\rho I^\alpha _{b-} f\left( \gamma\left( a^\rho \right)\right)           \Big)  \nonumber
\\&\leq   h\left( \frac{1}{2} \right) \alpha \rho [f(\gamma(a^\rho)) +f(\gamma(b^\rho))]  \int_0^1           t^{\alpha \rho -1} (h(t^\rho)  +h(1-t^\rho))  dt.    \label{cnw}
\end{align}

For all $t\in[0,1]$ and $q>1$, we use H\"older  inequality to find
\begin{align}
\int_0^1           t^{\alpha \rho -1} h(t^\rho) dt \leq \frac{1}{\rho} \left(\frac{q-1}{\alpha q-1} \right)^{\frac{q-1}{q}}  \|h\|_{L^q[0,1]} .  \label{hol}
\end{align}

%Note that the change of variable $u= t^\rho$ yields
%\begin{align}
%\int_0^1 t^{\alpha\rho-1} h(t^\rho) dt = \int_0^1 u^{\alpha-1} h(u) du= \Gamma(\alpha) J^\alpha_{1-} h(0).  \label{hol}
%\end{align}

On the other hand, we use the change of variable $u^\rho= 1-t^\rho$ to find 
\begin{align}
\int_0^1           t^{\alpha \rho -1} h(1-t^\rho) dt =\int_0^1 (1-u^\rho)^{\alpha-1} u^{\rho-1} h(u^\rho) du   = \frac{\Gamma(\alpha)}{\rho^{1-\alpha}}  {}^\rho I_{0+}^\alpha h(1).             \label{jhk}
\end{align}

Noting that $h$ is nonnegative by definition, we combine \eqref{jhk}, \eqref{hol} and \eqref{cnw}
to prove the second inequality in \eqref{cb1}.  This completes the proof.

\end{proof}

\begin{rmk}\normalfont
Note that the change of variable $u= t^\rho$ yields
 $$\Gamma(\alpha) J^\alpha_{1-} h(0)= \frac{1}{\rho}\int_0^1 u^{\alpha-1} h(u) du = \int_0^1 t^{\alpha\rho-1} h(t^\rho) dt   \overset{\eqref{hol}}{\leq}  \frac{1}{\rho} \left(\frac{q-1}{\alpha q-1} \right)^{\frac{q-1}{q}}  \|h\|_{L^q[0,1]}. $$
Thus we can remove the condition that $h\in L^q[0,1]$ and refine the estimate \eqref{cb1}. We have the following theorem.
\end{rmk}

\begin{theo}
Let $\alpha>0$ and $\rho>0$. Let $(M,d)$ be a global NPC space, $K\subseteq M$ a convex set and $f:K \rightarrow\mathbb{R}$ a geodesic $h$-convex function. Then the following inequalities hold
\begin{align}
f\left( \gamma\Big|_{[a^\rho , b^\rho] }\left( \frac{1}{2} \right) \right) &  \leq \frac{\rho^\alpha \Gamma(\alpha+1)}{(b^\rho -a^\rho)^{\alpha}} h\left( \frac{1}{2} \right) \Big(    {}^\rho I^\alpha _{a+} f\left( \gamma\left( b^\rho \right)\right) +     {}^\rho I^\alpha _{b-} f\left( \gamma\left( a^\rho \right)\right)           \Big)    \nonumber
\\&\leq    h\left( \frac{1}{2} \right) [f(\gamma(a^\rho)) +f(\gamma(b^\rho))]  \Gamma(\alpha+1) \left[ \rho J^\alpha_{1-} h(0)              + \rho^\alpha  \,\,{}^\rho I_{0+}^\alpha h(1)   \right]                                 \label{cb2}
\end{align}
where $0\leq a, b\leq 1$.
\end{theo}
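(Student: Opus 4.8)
The plan is to rerun the proof of the preceding theorem almost word for word, the single change being that in the last step we no longer estimate $\int_0^1 t^{\alpha\rho-1}h(t^\rho)\,dt$ from above by H\"older's inequality \eqref{hol}, but evaluate it \emph{exactly} through the substitution $u=t^\rho$ recorded in the Remark. This is precisely what lets us drop the hypothesis $h\in L^q[0,1]$ and simultaneously replace the H\"older constant by the honest fractional integral; it also explains why $f$ may now be allowed to take values in all of $\mathbb{R}$, since the sign of $f$ entered the earlier argument only at the point where an integral was replaced by a larger majorant multiplied by $f(\gamma(a^\rho))+f(\gamma(b^\rho))$.

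First I would observe that the left-hand inequality in \eqref{cb2} is identical to the left-hand inequality in \eqref{cb1}: its proof uses only the geodesic $h$-convexity of $f$ (Definition \ref{h-conv}) together with the positivity of the weight $t^{\alpha\rho-1}$ and of $h(1/2)$, so it transfers verbatim. Concretely, one applies the midpoint inequality $f(\gamma(\tfrac{x^\rho+y^\rho}{2}))\le h(\tfrac12)[f(\gamma(x^\rho))+f(\gamma(y^\rho))]$ with $x^\rho=t^\rho a^\rho+(1-t^\rho)b^\rho$ and $y^\rho=t^\rho b^\rho+(1-t^\rho)a^\rho$, multiplies by $t^{\alpha\rho-1}$, integrates over $t\in[0,1]$, and recognises the two Katugampola integrals on the right after the change of variables.

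For the right-hand inequality I would start from \eqref{cnw}, which again rests only on the $h$-convexity of $f$ and the nonnegativity of $h$, and therefore holds verbatim for $f:K\to\mathbb{R}$. It then remains to compute $\int_0^1 t^{\alpha\rho-1}\bigl(h(t^\rho)+h(1-t^\rho)\bigr)\,dt$. The term with $h(1-t^\rho)$ is dealt with exactly as in \eqref{jhk}: the substitution $u^\rho=1-t^\rho$ identifies $\int_0^1 t^{\alpha\rho-1}h(1-t^\rho)\,dt$ with $\frac{\Gamma(\alpha)}{\rho^{1-\alpha}}\,{}^\rho I_{0+}^\alpha h(1)$. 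For the term with $h(t^\rho)$, instead of \eqref{hol} I would use the substitution $u=t^\rho$ as in the Remark to write $\int_0^1 t^{\alpha\rho-1}h(t^\rho)\,dt$ as a multiple of the Riemann--Liouville integral $\int_0^1 u^{\alpha-1}h(u)\,du=\Gamma(\alpha)J^\alpha_{1-}h(0)$. Feeding both evaluations into \eqref{cnw} and collecting the prefactor $\alpha\rho$, the Gamma-factor $\Gamma(\alpha)$, and the powers of $\rho$ produced by the two substitutions leaves the bracket $\Gamma(\alpha+1)\bigl[\rho J^\alpha_{1-}h(0)+\rho^\alpha\,{}^\rho I_{0+}^\alpha h(1)\bigr]$, which is the right-hand side of \eqref{cb2}.

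There is no genuine obstacle here: the argument is a line-by-line repetition of the previous proof with one inequality upgraded to an identity. The only place that demands attention is the constant bookkeeping in the last step — carefully tracking the powers of $\rho$ and the Gamma-factors created by the substitutions $u=t^\rho$ and $u^\rho=1-t^\rho$, and matching $\int_0^1 u^{\alpha-1}h(u)\,du$ against the normalisation $J^\alpha_{1-}h(0)=\frac{1}{\Gamma(\alpha)}\int_0^1 u^{\alpha-1}h(u)\,du$ used in the definition of the Riemann--Liouville integral — so that the final bracket emerges exactly in the stated form.
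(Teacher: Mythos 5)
Your proposal is correct and matches the paper's intent exactly: the paper gives no separate proof of this theorem, deriving it from the preceding one by exactly the substitution $u=t^\rho$ recorded in the Remark, which turns the H\"older bound \eqref{hol} into the identity $\int_0^1 t^{\alpha\rho-1}h(t^\rho)\,dt=\Gamma(\alpha)J^\alpha_{1-}h(0)$ and thereby removes both the $L^q$ hypothesis and the nonnegativity of $f$. Your bookkeeping of the factors $\alpha\rho$, $\Gamma(\alpha)$ and the powers of $\rho$ reproduces the stated bracket, so nothing further is needed.
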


\begin{theo}\label{lab}
Let $\alpha>0$ and $\rho>0$. Let $(M,d)$ be a global NPC space, $K\subseteq M$ a convex set and $f:K \rightarrow\mathbb{R}$ a geodesic $h$-convex function. Then the following inequalities hold
\begin{align}
f\left( \gamma\left( \frac{1 }{2} \right) \right) & 
 \leq \frac{\rho^\alpha \Gamma(\alpha+1)}{(b^\rho -a^\rho)^{\alpha}} h\left( \frac{1}{2} \right) \Big(    {}^\rho I^\alpha_{a+} f(\gamma(b^\rho))                        +                 {}^\rho I^\alpha_{c-} f(\gamma(s^\rho))                \Big)    \nonumber
\\&\leq   \frac{f(\gamma(0))      +  f(\gamma(1)) }{(b^\rho-a^\rho)^\alpha}   \rho^{\alpha}    \Gamma(\alpha+1) h\left( \frac{1}{2} \right)  \Big(    {}^\rho I^\alpha_{a+} h(b^\rho)                        +                 {}^\rho I^\alpha_{c-} h(s^\rho)              \Big)          \label{ty1}                         
\end{align}
where $c:= (1-a^\rho)^{\frac{1}{\rho}}$, $s:=(1-b^\rho)^{\frac{1}{\rho}} $ and $0\leq a, b\leq 1$.
\end{theo}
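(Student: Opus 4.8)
The plan is to adapt the proof of the first theorem of this section, exploiting that the definitions $c=(1-a^\rho)^{1/\rho}$, $s=(1-b^\rho)^{1/\rho}$ are engineered so that two natural parametrizations of the integration variable always add up to $1$. Concretely, for $t\in[0,1]$ set
\[
x^\rho := t^\rho a^\rho + (1-t^\rho)b^\rho, \qquad y^\rho := t^\rho c^\rho + (1-t^\rho)s^\rho .
\]
Since $a^\rho + c^\rho = 1 = b^\rho + s^\rho$, we get $x^\rho + y^\rho = 1$ for every $t$, hence $\tfrac{x^\rho + y^\rho}{2} = \tfrac12$. As $t$ runs over $[0,1]$ the variable $x$ runs monotonically over $[a,b]$ and $y$ over $[s,c]$ (we may assume $a\le b$, so $s\le c$), and $b^\rho - x^\rho = t^\rho(b^\rho - a^\rho)$ while $y^\rho - s^\rho = t^\rho(c^\rho - s^\rho) = t^\rho(b^\rho - a^\rho)$.

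For the first inequality I would start from the midpoint form of geodesic $h$-convexity already used above, $f\big(\gamma(\tfrac{x^\rho+y^\rho}{2})\big) \le h(\tfrac12)\big[f(\gamma(x^\rho)) + f(\gamma(y^\rho))\big]$, which here reads $f(\gamma(\tfrac12)) \le h(\tfrac12)[f(\gamma(x^\rho)) + f(\gamma(y^\rho))]$. Multiplying by $t^{\alpha\rho - 1}$ and integrating over $[0,1]$ yields $\tfrac1{\alpha\rho}f(\gamma(\tfrac12))$ on the left; on the right I substitute $x=x(t)$ in the first integral and $y=y(t)$ in the second. Using $x^{\rho-1}dx = t^{\rho-1}(a^\rho - b^\rho)\,dt$, $y^{\rho-1}dy = t^{\rho-1}(c^\rho - s^\rho)\,dt$, the factorization $t^{\alpha\rho-1} = t^{(\alpha-1)\rho}t^{\rho-1}$, and the identities above for $b^\rho - x^\rho$ and $y^\rho - s^\rho$, the two integrals become $\tfrac1{(b^\rho - a^\rho)^\alpha}\int_a^b (b^\rho - x^\rho)^{\alpha-1}x^{\rho-1}f(\gamma(x^\rho))\,dx$ and $\tfrac1{(b^\rho - a^\rho)^\alpha}\int_s^c (y^\rho - s^\rho)^{\alpha-1}y^{\rho-1}f(\gamma(y^\rho))\,dy$, which by the definition of the Katugampola integrals equal $\tfrac{\rho^{\alpha-1}\Gamma(\alpha)}{(b^\rho - a^\rho)^\alpha}\,{}^\rho I^\alpha_{a+}f(\gamma(b^\rho))$ and $\tfrac{\rho^{\alpha-1}\Gamma(\alpha)}{(b^\rho - a^\rho)^\alpha}\,{}^\rho I^\alpha_{c-}f(\gamma(s^\rho))$. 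Multiplying through by $\alpha\rho$ and using $\alpha\Gamma(\alpha)=\Gamma(\alpha+1)$ gives the first inequality in \eqref{ty1}.

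For the second inequality the key observation is that for any $x^\rho\in[0,1]$ the points $\gamma(x^\rho)$ and $\gamma(y^\rho)=\gamma(1-x^\rho)$ both lie on the geodesic $\gamma:[0,1]\to K$ with endpoints $\gamma(0),\gamma(1)$, so geodesic $h$-convexity gives $f(\gamma(x^\rho))\le h(1-x^\rho)f(\gamma(0)) + h(x^\rho)f(\gamma(1))$ and, symmetrically, $f(\gamma(y^\rho))\le h(x^\rho)f(\gamma(0)) + h(1-x^\rho)f(\gamma(1))$; adding yields $f(\gamma(x^\rho)) + f(\gamma(y^\rho))\le \big(h(x^\rho)+h(y^\rho)\big)\big[f(\gamma(0))+f(\gamma(1))\big]$. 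Multiplying this by $h(\tfrac12)\,\alpha\rho\,t^{\alpha\rho-1}$, integrating over $[0,1]$, and performing exactly the same two substitutions as before on both the $f$-terms (which reproduce the middle expression of \eqref{ty1}) and the $h$-terms (which produce $\tfrac{\rho^{\alpha-1}\Gamma(\alpha)}{(b^\rho - a^\rho)^\alpha}\,{}^\rho I^\alpha_{a+}h(b^\rho)$ and $\tfrac{\rho^{\alpha-1}\Gamma(\alpha)}{(b^\rho - a^\rho)^\alpha}\,{}^\rho I^\alpha_{c-}h(s^\rho)$, with the same prefactor $\tfrac{\rho^\alpha\Gamma(\alpha+1)}{(b^\rho-a^\rho)^\alpha}h(\tfrac12)$) gives the second inequality in \eqref{ty1}.

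I do not expect a genuine obstacle: the proof is a bookkeeping variant of the first theorem of this section. The one load-bearing step is choosing the parametrizations $x(t),y(t)$ and checking $x^\rho+y^\rho\equiv1$ — this is precisely where the definitions of $c$ and $s$ enter — after which the only care needed is in tracking the Jacobians and orientations of the two substitutions so that the left- and right-sided operators ${}^\rho I^\alpha_{a+}$ and ${}^\rho I^\alpha_{c-}$ emerge with the correct base points $b$ and $s$. It is also worth noting that $0\le a,b\le1$ is exactly what makes $c$ and $s$ well-defined reals in $[0,1]$, and that one may harmlessly assume $a\le b$, equivalently $s\le c$.
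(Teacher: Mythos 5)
Your proposal is correct and follows essentially the same route as the paper: the paper also starts from $f(\gamma(1/2))\le h(1/2)[f(\gamma(x^\rho))+f(\gamma(1-x^\rho))]$ with $x^\rho=t^\rho a^\rho+(1-t^\rho)b^\rho$, and your auxiliary variable $y^\rho=t^\rho c^\rho+(1-t^\rho)s^\rho$ is identically $1-x^\rho$, so the two substitutions and the resulting left- and right-sided Katugampola integrals coincide with those in the paper's proof. The second inequality is likewise obtained there by the same pointwise bound $f(\gamma(x^\rho))+f(\gamma(1-x^\rho))\le[h(x^\rho)+h(1-x^\rho)][f(\gamma(0))+f(\gamma(1))]$ followed by the same weighted integration.
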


\begin{proof}
Since $f$ is geodesically $h$-convex, we have
\begin{align}
f\left(\gamma\left(1/2\right)  \right) \leq h\left(1/2 \right) (f(\gamma(x^\rho))  + f(\gamma(1-x^\rho))  )   . \label{hj1}
\end{align}

Setting $x^\rho= t^\rho a^\rho +(1-t^\rho) b^\rho$ where $t\in[0,1]$, multiplying both sides of \eqref{hj1} by  $t^{\alpha\rho-1}$ and integrating over $[0,1]$ gives
\begin{align}
\frac{f(\gamma(1/2))}{\alpha\rho}            \nonumber
&\leq   h(1/2) \int_0^1 t^{\alpha\rho-1} \Big(  f(\gamma(t^\rho a^\rho +(1-t^\rho)b^\rho))    +  f(\gamma(   (1-b^\rho) +t^\rho(b^\rho-a^\rho)))                     \Big)dt								\nonumber 
\\&= h(1/2) \int_a^b \left( \frac{b^\rho-x^\rho}{b^\rho-a^\rho} \right)^{\alpha-1} \frac{x^{\rho-1}}{b^\rho-a^\rho}  f(\gamma(x^\rho)) dx	\nonumber
\\&\quad\quad	+   h(1/2) \int_{(1-b^\rho)^{\frac{1}{\rho}}}^{(1-a^\rho)^{\frac{1}{\rho}}}      \left( \frac{u^\rho-(1-b^\rho)}{b^\rho-a^\rho} \right)^{\alpha-1} \frac{u^{\rho-1}}{b^\rho-a^\rho}  f(\gamma(u^\rho)) du						\nonumber
\\&=  \frac{h(1/2)}{(b^\rho -a^\rho)^\alpha} \Gamma(\alpha) \rho^{\alpha-1}  \Big(    {}^\rho I^\alpha_{a+} f(\gamma(b^\rho))                        +                 {}^\rho I^\alpha_{c-} f(\gamma(s^\rho))              \Big). \label{fg}
\end{align}

 The first inequality in \eqref{ty1} follows from \eqref{fg}. Next is to prove the second inequality in \eqref{ty1}. Since $f$ is geodesically $h$ convex, we have
\begin{align}
&f(\gamma(t^\rho a^\rho +(1-t^\rho)b^\rho))    +  f(\gamma(   (1-b^\rho) +t^\rho(b^\rho-a^\rho)))   \nonumber
\\&=f(\gamma(x^\rho)) + f(\gamma(1-x^\rho))    \nonumber
\\&\leq [h(x^\rho) +h(1-x^\rho)] [f(\gamma(0))      +  f(\gamma(1)) ]                               \nonumber
\\& =     [h(t^\rho a^\rho +(1-t^\rho)b^\rho) +h((1-b^\rho) +t^\rho(b^\rho-a^\rho))] [f(\gamma(0))      +  f(\gamma(1)) ]   .    \label{ty2}
\end{align}
Multiplying both sides of \eqref{ty2} by $t^{\alpha\rho-1}$ and integrating over $[0,1]$, we have
\begin{align}
\frac{     \Gamma(\alpha) \rho^{\alpha-1}}{(b^\rho -a^\rho)^\alpha}  \Big(    {}^\rho I^\alpha_{a+} f(\gamma(b^\rho))                        +                 {}^\rho I^\alpha_{c-} f(\gamma(s^\rho))              \Big) \leq
\frac{f(\gamma(0))      +  f(\gamma(1)) }{(b^\rho-a^\rho)^\alpha}       \Gamma(\alpha) \rho^{\alpha-1}  \Big(    {}^\rho I^\alpha_{a+} h(b^\rho)                        +                 {}^\rho I^\alpha_{c-} h(s^\rho)              \Big)    \label{gh2}
\end{align}
The second inequality in \eqref{ty1} follows from \eqref{gh2}. This completes the proof.
\end{proof}

Let $k\geq 1$, recall from Definition \ref{srq} that the function $G_y(x):= d^k(x,y)$ is convex. Let  $h: [0,1]\to(0, \infty)$ be a map satisfying $h(t)\geq t$ for all $t\in[0,1]$, then
\begin{align}
G_y(\gamma(t)) \leq (1-t)G_y(\gamma(0)) +tG_y(\gamma(1)) \leq h(1-t) G_y(\gamma(0)) +h(t) G_y(\gamma(1))   \nonumber
\end{align}
so that $G_y$ is geodesically $h$-convex. In particular, the function $g_y(t)= d^k (y, \gamma_{[x_1, x_2]}(t)) $ is $h$-convex.
Consequently, by Theorem \ref{lab}, we have
\begin{align}
d^k(y, \gamma_{[x_1, x_2]}(1/2))& 
 \leq \frac{\rho^\alpha \Gamma(\alpha+1)}{(b^\rho -a^\rho)^{\alpha}} h\left( \frac{1}{2} \right) \Big(    {}^\rho I^\alpha_{a+} d^k(y, \gamma_{[x_1, x_2]}(b^\rho))                        +                 {}^\rho I^\alpha_{c-}d^k(y, \gamma_{[x_1, x_2]}(s^\rho))                \Big)    \nonumber
\\&\leq   \frac{d^k(y, x_1)      +  d^k(y, x_2)  }{(b^\rho-a^\rho)^\alpha}   \rho^{\alpha}    \Gamma(\alpha+1) h\left( \frac{1}{2} \right)  \Big(    {}^\rho I^\alpha_{a+} h(b^\rho)                        +                 {}^\rho I^\alpha_{c-} h( s^\rho)              \Big)          \nonumber                         
\end{align}
where $c:= (1-a^\rho)^{\frac{1}{\rho}}$, $s:=(1-b^\rho)^{\frac{1}{\rho}} $ and $0\leq a, b\leq 1$. 
By assuming that $h(t)\geq t$ for all  $t\in[0,1]$, we use the $h$-convexity of $g:[0,1] \to \mathbb{R}$, $g(t)= d^2(\gamma(t), \tilde\gamma(t) )$ (where $\gamma$ and $\tilde\gamma$ are geodesics) to obtain the following corollary, for $k=2$.

\begin{cor}
Let $\alpha>0$ and $\rho>0$. Let $(M,d)$ be a global NPC space, and let $\gamma:=\gamma_{[x_1, x_2]}$ and $\tilde\gamma:=\tilde\gamma_{[y_1, y_2]}$ be two geodesics connecting the points  $x_1, x_2 \in M$ and $y_1,y_2\in M$ respectively. Suppose that $h:[0,1] \to (0,\infty)$ is a function satisfying $h(t)\geq t$, for all $t\in[0,1]$. Then the following inequalities hold
\begin{align}
d^2( \tilde\gamma(1/2), \gamma(1/2))& 
 \leq \frac{\rho^\alpha \Gamma(\alpha+1)}{(b^\rho -a^\rho)^{\alpha}} h\left( \frac{1}{2} \right) \Big(    {}^\rho I^\alpha_{a+} d^2( \tilde\gamma(b^\rho), \gamma(b^\rho))                        +                 {}^\rho I^\alpha_{c-}d^2(\tilde\gamma(s^\rho), \gamma(s^\rho))                \Big)    \nonumber
\\&\leq  \frac{d^2(y_1,x_1)      +  d^2(y_2, x_2)  }{(b^\rho-a^\rho)^\alpha}   \mathcal{E}(h)        -              C(\alpha,\rho)[  d(y_1, y_2) d(x_1, x_2)             ]^2   \nonumber
\\&\leq   \frac{d^2(y_1,x_1)      +  d^2(y_2, x_2)  }{(b^\rho-a^\rho)^\alpha}   \mathcal{E}(h)    \nonumber                          
\end{align}
where $c:= (1-a^\rho)^{\frac{1}{\rho}}$, $s:=(1-b^\rho)^{\frac{1}{\rho}} $ , $0\leq a, b\leq 1$, $C(\alpha, \rho) \geq 0$,
\[
C(\alpha, \rho):=  \frac{   (a^\rho\alpha+b^\rho)(2(\alpha+2)-4b^\rho)-2a^{2\rho} \alpha(\alpha+1)}{\alpha\rho(\alpha+1)(\alpha+2)}  \quad\textnormal{and} \quad     \mathcal{E}(h):= \rho^{\alpha}    \Gamma(\alpha+1) h\left( \frac{1}{2} \right)  \Big(    {}^\rho I^\alpha_{a+} h(b^\rho)                        +                 {}^\rho I^\alpha_{c-} h(s^\rho)              \Big)  .   
\]
\end{cor}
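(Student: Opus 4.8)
The plan is to obtain all three inequalities by applying Theorem~\ref{lab} to the auxiliary one-dimensional function $g\colon[0,1]\to\mathbb{R}$, $g(t):=d^2(\gamma(t),\tilde\gamma(t))$, and then sharpening the second of the resulting inequalities using the extra convexity that $g$ carries because $M$ is a global NPC space. First I would note that $g$ is $h$-convex: the distance map is convex along pairs of geodesics and $x\mapsto x^2$ is convex and nondecreasing on $[0,\infty)$, so $g$ is convex, and since $h(t)\ge t$ and $g\ge0$ it is $h$-convex. The proof of Theorem~\ref{lab} uses only the $h$-convexity of the composite $f\circ\gamma$, so it runs verbatim with $f\circ\gamma$ replaced by $g$. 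Reading off $g(0)=d^2(x_1,y_1)$, $g(1)=d^2(x_2,y_2)$, $g(1/2)=d^2(\gamma(1/2),\tilde\gamma(1/2))$, $g(b^\rho)=d^2(\gamma(b^\rho),\tilde\gamma(b^\rho))$ and $g(s^\rho)=d^2(\gamma(s^\rho),\tilde\gamma(s^\rho))$, Theorem~\ref{lab} applied to $g$ already yields the first inequality of the Corollary and its last inequality, with $\mathcal{E}(h)$ exactly as in the statement.

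The real content is the middle inequality, which must improve on the bound $g(\sigma)+g(1-\sigma)\le[h(\sigma)+h(1-\sigma)](g(0)+g(1))$ that powers the proof of Theorem~\ref{lab} (the step \eqref{ty2}--\eqref{gh2}). I would replace it by
\[ g(\sigma)+g(1-\sigma)\ \le\ [h(\sigma)+h(1-\sigma)]\,(g(0)+g(1))\ -\ 2\sigma(1-\sigma)\,(d(x_1,x_2)-d(y_1,y_2))^2,\qquad \sigma\in[0,1]. \]
To establish this I would apply the quadrilateral comparison theorem of \cite{kor,res,uwe} recalled above twice: once to the geodesic $\gamma$ with the four points $x_1,x_2,\tilde\gamma(\sigma),\tilde\gamma(1-\sigma)$ and parameter $\sigma$, and once to the geodesic $\tilde\gamma$ with the four points $y_1,y_2,x_1,x_2$ and parameter $\sigma$. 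Adding the two inequalities, using that $d(\tilde\gamma(\sigma),\tilde\gamma(1-\sigma))=|1-2\sigma|\,d(y_1,y_2)$ since these lie on one geodesic, and carrying out a short algebraic simplification — together with the symmetry of $g(\sigma)+g(1-\sigma)$ under $\sigma\mapsto1-\sigma$, which lets one assume $\sigma\le 1/2$ — yields $g(\sigma)+g(1-\sigma)\le g(0)+g(1)-2\sigma(1-\sigma)(d(x_1,x_2)-d(y_1,y_2))^2$; the displayed inequality then follows because $h(\sigma)+h(1-\sigma)\ge1$ and $g(0)+g(1)\ge0$. I expect this to be the main obstacle: iterating the CN (comparison) inequality by itself is too lossy — it leaves behind uncontrolled off-diagonal distances $d(x_1,y_2)$ and $d(x_2,y_1)$ — so one genuinely needs the quadrilateral comparison, and the two applications must be arranged so that precisely those terms cancel when the inequalities are added.

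With the sharpened estimate in hand, I would rerun the integration argument of Theorem~\ref{lab} unchanged: put $\sigma=x^\rho=t^\rho a^\rho+(1-t^\rho)b^\rho$, multiply by $t^{\alpha\rho-1}$, integrate over $t\in[0,1]$, and then multiply through by $\alpha\rho\,h(1/2)$ exactly as in the passage from \eqref{gh2} to the second inequality of Theorem~\ref{lab}. The main term reproduces $\dfrac{d^2(y_1,x_1)+d^2(y_2,x_2)}{(b^\rho-a^\rho)^\alpha}\,\mathcal{E}(h)$, while the correction contributes $-\,2\alpha\rho\,h(1/2)\,(d(x_1,x_2)-d(y_1,y_2))^2\int_0^1 t^{\alpha\rho-1}x^\rho(1-x^\rho)\,dt$. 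The substitution $w=t^\rho$ turns that integral into $\tfrac1\rho\int_0^1 w^{\alpha-1}(b^\rho-wD)(1-b^\rho+wD)\,dw$ with $D=b^\rho-a^\rho$, an elementary polynomial integral equal to $\tfrac1\rho\!\left(\tfrac{b^\rho(1-b^\rho)}{\alpha}+\tfrac{D(2b^\rho-1)}{\alpha+1}-\tfrac{D^2}{\alpha+2}\right)$; putting this over the common denominator $\alpha\rho(\alpha+1)(\alpha+2)$ and collecting the constants carried along from Theorem~\ref{lab} produces the coefficient $C(\alpha,\rho)$ in the stated form. Finally I would check $C(\alpha,\rho)\ge0$ directly from the closed form: since $a^\rho,b^\rho\in[0,1]$ one has $2(\alpha+2)-4b^\rho\ge2\alpha>0$, so the numerator is at least $2\alpha\,(\alpha a^\rho(1-a^\rho)+b^\rho-a^{2\rho})\ge0$ (using $a^{2\rho}\le a^\rho\le b^\rho$), and the Corollary's last inequality then follows by discarding the nonpositive term $-C(\alpha,\rho)[\,\cdot\,]^2$.
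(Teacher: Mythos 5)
Your proposal is correct and reaches the statement, but it takes a genuinely different route at the one step that carries the content of the middle inequality. The paper simply quotes Sturm's Corollary~2.5 (the two--geodesic comparison $d^2(\gamma(t),\tilde\gamma(t))\le(1-t)d^2(x_1,y_1)+t\,d^2(x_2,y_2)-t(1-t)[d(x_1,x_2)-d(y_1,y_2)]^2$, with $1-t\le h(1-t)$, $t\le h(t)$ inserted afterwards), symmetrizes it in $t\mapsto 1-t$, and then integrates exactly as in Theorem~\ref{lab}. You instead re-derive the symmetrized estimate $g(\sigma)+g(1-\sigma)\le g(0)+g(1)-2\sigma(1-\sigma)(d(x_1,x_2)-d(y_1,y_2))^2$ from the quadrilateral comparison theorem of Korevaar--Schoen/Reshetnyak/Mayer already stated in Section~2, applied once along $\gamma$ against $\tilde\gamma(\sigma),\tilde\gamma(1-\sigma)$ and once along $\tilde\gamma$ against $x_1,x_2$. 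I checked the ``short algebraic simplification'' you defer: with $d_x=d(x_1,x_2)$, $d_y=d(y_1,y_2)$ and $|1-2\sigma|=1-2\sigma$ for $\sigma\le1/2$, the extra terms collect to $-2\sigma(1-\sigma)(d_x-d_y)^2$ exactly, so your two applications do cancel the off-diagonal distances as you hoped. What your route buys is self-containedness (everything used is already displayed in the paper) at the cost of a longer computation; what the paper's route buys is brevity via an external citation. The remainder — applying Theorem~\ref{lab} to $g$, the substitution $w=t^\rho$ giving $C(\alpha,\rho)$ (your closed form agrees with the paper's after clearing denominators), and $C(\alpha,\rho)\ge0$ (which the paper gets for free from nonnegativity of the integrand $2u^\rho(1-u^\rho)t^{\alpha\rho-1}$, while you argue from the closed form, implicitly using $a\le b$) — is the same in substance. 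One final remark: both your proof and the paper's produce the correction term with $[d(y_1,y_2)-d(x_1,x_2)]^2$, whereas the corollary as printed shows a product $[d(y_1,y_2)\,d(x_1,x_2)]^2$; you have correctly identified (and silently fixed) what is evidently a typographical slip in the statement.
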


\begin{proof}
We use Corollary 2.5 in \cite{sturm} (a geodesic comparison result)   to write the estimate
\begin{align}
d^2( \tilde\gamma_{[y_1,y_2]}(u^\rho), \gamma_{[x_1, x_2]} (u^\rho)) &\leq h(1-u^\rho) d^2(y_1,x_1 ) + h(u^\rho) d^2(y_2, x_2) -u^\rho(1-u^\rho)[ d(y_1, y_2) -d(x_1, x_2)  ]^2  \nonumber
\\&\leq  h(1-u^\rho) d^2(y_1,x_1 ) + h(u^\rho) d^2(y_2, x_2) \nonumber
\end{align}
where $u\in[0,1]$ and $\rho>0$.  Hence, we deduce that
\begin{align}
&d^2( \tilde\gamma_{[y_1,y_2]}(u^\rho), \gamma_{[x_1, x_2]} (u^\rho)) + d^2( \tilde\gamma_{[y_1,y_2]}(1-u^\rho), \gamma_{[x_1, x_2]} (1-u^\rho)) \nonumber
\\ &\leq [h(u^\rho)+h(1-u^\rho)] d^2(y_1,x_1 ) + [h(u^\rho)+h(1-u^\rho)] d^2(y_2, x_2) -2u^\rho(1-u^\rho)[ d(y_1, y_2) -d(x_1, x_2)  ]^2  \nonumber
\\&\leq  [h(u^\rho)+h(1-u^\rho)] \left[d^2(y_1,x_1 ) +  d^2(y_2, x_2)  \right] . \label{rstu}
\end{align}

Set $u^\rho= t^\rho a^\rho +(1-t^\rho) b^\rho$  in the estimates in \eqref{rstu}, so that $u^\rho$ is the line segment connecting $a^\rho$ and $b^\rho$. Next, multiply the resulting estimates  by $h(1/2) t^{\alpha\rho-1}$ and   integrate on $[0,1]$ with respect to $t$, via Katugampola's fractional integral operators. Finally, apply Theorem \ref{lab}. 
Note that the constant $C(\alpha, \rho)$ is computed as follows.

Since $0\leq u^\rho \leq 1$, we have $2u^\rho(1-u^\rho) \geq 0$. Hence
\begin{align}
0\leq C(\alpha, \rho)=&\int_0^1[2(  t^\rho a^\rho +(1-t^\rho) b^\rho )-2  ( t^\rho a^\rho +(1-t^\rho) b^\rho)^2 ] t^{\alpha\rho-1} dt  \nonumber
=- \frac{2(b^\rho-a^\rho)}{\rho(\alpha+1)}  +\frac{2b^\rho}{\alpha\rho}    \nonumber
\\&- \frac{2a^{2\rho}\alpha(\alpha+1) +  2b^{2\rho}[(\alpha+1)(\alpha+2) -2\alpha(\alpha+2) +\alpha(\alpha+1) ]  +4a^\rho b^\rho[\alpha(\alpha+2) -\alpha(\alpha+1)  ]          }{\alpha\rho(\alpha+1)(\alpha+2)}    \nonumber
\\&= -\frac{2(b^\rho-a^\rho)\alpha(\alpha+2)-2b^\rho(\alpha+1)(\alpha+2)            }{\alpha\rho(\alpha+1)(\alpha+2)}       
          -   \frac{   2a^{2\rho}\alpha(\alpha+1) +  4b^{2\rho}  +4a^\rho b^\rho\alpha           }{\alpha\rho(\alpha+1)(\alpha+2)}                  \nonumber
\\&=  -\frac{2a^{2\rho} \alpha(\alpha+1)   +(a^\rho\alpha+b^\rho)(4b^\rho-2(\alpha+2))}{\alpha\rho(\alpha+1)(\alpha+2)}   .               \nonumber
\end{align}

This concludes the proof.
\end{proof}

 \bibliographystyle{amsplain}
\small
\begin {thebibliography}{n}
\bibitem{alex1}A. Alexandrov, A theorem on triangles in a metric space and some of its applications, {\it Trudy Mat. Inst. Steklov.} {\bf38} (1951), 5-23.

\bibitem{alex2} A. Alexandrov, \"Uber eine Verallgemeinerung der Riemannschen Geometrie, Schriften Forschungsinst. {\it Math.} {\bf1} (1957), 33-84.

\bibitem{barani1} A. Barani, Hermite-Hadamard and Ostrowski type inequalities on hemispheres, {\it Mediterr. J. Math.} {\bf 13}(2016), 4253--4263. DOI 10.1007/s00009-016-0743-3
1660-5446/16/064253-11

\bibitem{bar} A. Barani, M.R. Pouryayevali,  Invex sets and preinvex functions on Riemannian manifolds, {\it J. Math. Anal. Appl.}, {\bf328}, (2007), 767-779.

\bibitem{bes} M. Bessenyei, Zs. P\'ales, Characterisation of convexity via Hadamard's inequality, {\it Math. Inequal. Appl.}. {\bf9}(2006), 53-62.

\bibitem{mih} M. Bessenyei, The Hermite-Hadamard inequality on simplices, {\it Amer. Math. Monthly} {\bf115}(4), 2008, 339--345.

\bibitem{brid} M.R. Bridson, A. Haefliger, Metric spaces of nonpositive curvature, Grundlehren der Mathematischen Wissenschaften, {\bf319}, Springer-Verlag, 1999.

\bibitem{bru} F. Bruhat , J. Tits, Groupes r\'eductifs sur un corps local (French) {\it Inst. Hautes \'Etudes Sci. Publ. Math. no. 41} (1972), 5-251.

\bibitem{bus1}  H. Busemann, Spaces with non-positive curvature. {\it Acta Math.}{\bf80}  (1948), 259-31

\bibitem{bus2} H. Busemann, The geometry of geodesics, Academic Press, 1955.

\bibitem {conde} C. Conde, Refinements of the Hermite-Hadamard inequality in NPC global spaces, Annales Mathematicae Silesianae, {\bf 32}(2018), 133-144.  

\bibitem{conde1} C. Conde, A version of the Hermite-Hadamard inequality in a non-positive curvature space, Banach J. Math. Anal. {\bf 6}(2): 2012, 159-167

\bibitem {kat} H. Chen, U. N. Katugampola, Hermite-Hadamard and Hermite-Hadamard-Fej\'er type inequalities for generalized frcational integrals,   {\it  J. Math. Anal. Appl.}, {\bf446} (2017) 1274-1291.

\bibitem{drag1} S.S. Dragomir, On Hadamard's inequality for the convex mappings defined on a ball in the space and applications, {\it Math. Inequal. Appl.} {\bf 3}(2000): 177-187

\bibitem{drag2} S.S. Dragomir, On Hadamard's inequality on a disk, {\it J. Inequal. Pure Appl. Math.} {\bf 1}(2), 2000

\bibitem{drag3} S.S. Dragomir, On the Hadamard's inequality for convex functions on the coordinates in a rectangle from the plane, {\it Taiwanese J. Math.} {\bf 5}(2001), 775-788.

\bibitem{dragomir} S.S. Dragomir, C.E.M. Pearce,  Selected topics on Hermite-Hadamard inequalities, RGMIA Monagraphs, Victoria University, 2000; available at  http://ajmaa.org/RGMIA/monographs.php/. 

\bibitem{hadamard1} J. Hadamard, Les surfaces \`a courbures oppos\'ees et leur lignes g\'eod\'esiques. {\it Jour. Math. Put. Appl.} 5th series vol. 4 (1898),  27--73.

\bibitem{herm} C. Hermite, Sur deux limites d\'une int\'egrale d\'efinie, {\it Mathesis}, 3 (82) (1883)

\bibitem{jost} J. Jost, Nonpositive Curvature: Geometric and Analytic Aspects, Birkhuser Verlag, 1997.

\bibitem{kat2} U.N. Katugampola, New approach to a generalized fractional integral,   {\it  Applied Mathematics and Computation},  {\bf218} (2011) 860-865.

\bibitem{kell}  M. Kell, Sectional Curvature-Type Conditions on Metric Spaces. {\it  J Geom Anal} {\bf29}, 616-655 (2019). https://doi.org/10.1007/s12220-018-0013-7

\bibitem{kor} N.J. Korevaar, R.M. Schoen, Sobolev spaces and harmonic maps for metric space targets, {\it Comm. Anal. Geom.}, {\bf1} (1993), 561-
659

\bibitem{uwe} U.F. Mayer, Gradient flows on nonpositively curved metric spaces and harmonic maps, {\it Communications in Analysis and Geometry} {\bf6}(2), 1998, 199-253. 

\bibitem{flavia} F-C. Mitroi, E. Symeonidis, The converse of the Hermite-Hadamard inequality on simplices, {\it Expositiones Mathematicae}, {\bf30} (2012), 389-396.

\bibitem{niculescu} C.P. Niculescu, Old and new on the Hermite-Hadamard inequality,  {\it Real Anal. Exchange} {\bf 29} (2) 663--685, 2003-2004.

\bibitem{nicu} C.P. Niculescu, The Hermite-Hadamard inequality for convex functions on a global NPC space, {\it Journal of Mathematical Analysis and Applications} {\bf356}(2009), 295-301.

\bibitem{nicu1} C.P. Niculescu, L.-E. Persson, Convex Functions and their Applications. A Contemporary Approach, in: CMS Books in Mathematics, vol. 23, Springer-Verlag, New York, 2006.

\bibitem{ohta} S. Ohta, Convexities of metric spaces, {\it Geom. Dedicata} {\bf125}, 225–250 (2007). https://doi.org/10.1007/s10711-007-9159-3

\bibitem{phel}  R.R. Phelps, Lectures on Choquet’s Theorem, second ed., Lecture Notes in Math., vol. 1757, Springer-Verlag, Berlin, 2001

\bibitem{pod} I. Podlubny, Fractional Differential Equations: Mathematics in Science and Engineering, Academic Press, San Diego, CA, 1999.

\bibitem{res} Y.G. Reshetnyak, Nonexpanding maps in a space of curvature no greater than $K$, {\it Siberian Math. J.}, {\bf9} (1968), 918-927.

\bibitem{samk} S. G. Samko, A. A. Kilbas and O. I. Marichev, Fractional Integrals and Derivatives. Theory and Applications, Gordon and Breach, Amsterdam, 1993.

\bibitem{shaikh} A.A. Shaikh, A. Iqbal, C.K. Mondal, Some results on $\varphi$-convex functions and geodesic $\varphi$-convex functions, {Differential Geometry-Dynamical Systems,} {\bf20}, 2018, 159-169.

\bibitem{sak} M.Z. Sarikaya, A. Saglam, H. Yildirim, On some Hermite-Hadamard inequalities for $h$-convex functions, Journal of Mathematical Inequalities, {\bf 2}(3), 335-341 (2008).

\bibitem{kiy} K. Shiga, Hadamard manifolds, Geometry of Geodesics and Related Topics, {\it Advanced Studies in Pure Mathematics} {\bf 3}, 1984, 239-281.

\bibitem{sturm} K-T. Sturm, Probability measures on metric spaces of non-positive curvature, Heat kernels
and analysis on manifolds, graphs, and metric spaces (Paris, 2002), 357-390, {\it Contemp.
Math.}, {\bf338}, Amer. Math. Soc., Providence, RI, 2003.

\bibitem{varo} S. Varo\u sanec, On $h$-convexity, {\it J. Math. Anal. Appl.} {\bf 326} (2007), 303-311. doi:10.1016/j.jmaa.2006.02.086

\end{thebibliography}

\end{document}